\def\arxiv{\href{http://arxiv.org/abs/}{\texttt{arXiv:}}}
\def\doi{\href{http://dx.doi.org/}{doi:}}
\theoremstyle{plain}
\newtheorem{theorem}{Theorem}[section]
\newtheorem{proposition}[theorem]{Proposition}
\newtheorem{lemma}[theorem]{Lemma}
\newtheorem{corollary}[theorem]{Corollary}
\theoremstyle{definition}
\newtheorem{example}[theorem]{Example}
\newtheorem{question}[theorem]{Question}
\theoremstyle{remark}
\newtheorem*{acknowledgements}{Acknowledgements}
\numberwithin{equation}{section}
\def\cf{\emph{cf.}}
\let\newterm\relax
\def\N{\mathbb N}
\def\Z{\mathbb Z}
\def\R{\mathbb R}
\def\C{\mathbb C}
\def\kk{\Bbbk}
\def\RP{\mathbb{RP}}
\def\CP{\mathbb{CP}}
\def\GG{\mathcal{G}}
\def\XX{\mathcal{X}}
\def\YY{\mathcal{Y}}
\def\EE{\mathcal{E}}
\def\BB{\mathcal{B}}
\def\SS{\mathcal{S}}
\def\SP#1#2{SP^{#2}(#1)}
\def\GP#1#2{#1^{#2}}
\def\timesG{\mathop{\times_{G}}}
\def\timesGG{\mathop{\times_{\GG}}}
\def\otimesGG{\mathop{\otimes_{N(\GG)}}}
\DeclareMathOperator{\Tor}{Tor}
\def\triv{_{\mathrm{triv}}}
\begin{document}

\title[Symmetric products of equivariantly formal spaces]{Symmetric products of\\equivariantly formal spaces}
\author{Matthias Franz}
\thanks{The author was supported by an NSERC Discovery Grant.}
\address{Department of Mathematics, University of Western Ontario,
      London, Ont.\ N6A\;5B7, Canada}
\email{mfranz@uwo.ca}
      
\subjclass[2010]{Primary 55N91, 55S15; secondary 14P25}

\begin{abstract}
  Let \(X\) be a CW~complex with a continuous action of a topological group~\(G\).
  We show that if \(X\) is equivariantly formal for singular cohomology
  with coefficients in some field~\(\kk\), then so are all symmetric products of~\(X\)
  and in fact all its \(\Gamma\)-products.
  In particular, symmetric products
  of quasi-projective M-varieties are again M-varieties.
  This generalizes a result by Biswas and D'Mello
  about symmetric products of M-curves.
  We also discuss several related questions.
\end{abstract}

\maketitle

\section{Statement of the results}

Let \(X\) be a complex algebraic variety
with an anti-holomorphic involution~\(\tau\).
Then the sum of the \(\Z_{2}\)-Betti numbers of the fixed point set~\(X^{\tau}\)
cannot exceed the corresponding sum for~\(X\). In case of equality,
\begin{equation}
  \label{eq:maximal}
  \dim H^{*}(X^{\tau};\Z_{2}) = \dim H^{*}(X;\Z_{2}),
\end{equation}
one calls \(X\) \newterm{maximal} or an \newterm{M-variety}.
Maximal varieties are an important object of study in real algebraic geometry.

Let \(n\ge0\).
The \(n\)-th symmetric product~\(\SP{X}{n}\) of~\(X\) is the quotient
of the Cartesian product~\(X^{n}\)
by the canonical action of the symmetric group~\(S_{n}\);
\(\SP{X}{0}\) is a point.
If \(X\) is quasi-projective,
then \(\SP{X}{n}\)
is again a complex algebraic variety
equipped with an anti-holomorphic involution induced by~\(\tau\).

Assume that \(X\) is a compact connected Riemann surface of genus~\(g\ge0\).
In this case, Biswas--D'Mello~\cite{BiswasDMello}
have recently shown that if \(X\) is maximal,
then so is \(\SP{X}{n}\) for~\(n\le 3\) and~\(n\ge 2g-1\).
The main purpose of the present note is to point out that this conclusion holds in far greater generality.

A continuous involution~\(\tau\) on a topological space~\(X\) is the same as
a continuous action of the group~\(C=\{1,\tau\}\cong\Z_{2}\).
For many ``nice'' \(C\)-spaces,
including the algebraic varieties considered above,
the equality~\eqref{eq:maximal} is equivalent to the surjectivity
of the canonical restriction map \(H_{C}^{*}(X;\Z_{2})\to H^{*}(X;\Z_{2})\) from equivariant
to ordinary cohomology, see Proposition\nobreakspace \ref {thm:betti-sum-equiv-cohom} below.

We recall the definition of (Borel) equivariant cohomology,
\cf~\cite[Sec.~III.1]{tomDieck:1987}.
Let \(G\) be a topological group, and let \(EG\to BG\) be the universal \(G\)-bundle;
for \(G=C\) this is the bundle~\(S^{\infty}\to\RP^{\infty}\).
The equivariant cohomology of a \(G\)-space~\(X\) with coefficients in the field~\(\kk\)
is defined as \(H_{G}^{*}(X;\kk)=H^{*}(X_{G};\kk)\),
where \(H^{*}(-)\) denotes singular cohomology
and the Borel construction~\(X_{G}=EG\timesG X\) is the
quotient of~\(EG\times X\) by the diagonal \(G\)-action.

If the inclusion of the fibre~\(X\hookrightarrow X_{G}\)
induces a surjection in cohomology,
then \(X\) is called \newterm{equivariantly formal} over~\(\kk\).
This condition is equivalent to the freeness of~\(H_{G}^{*}(X;\kk)\) over~\(H^{*}(BG;\kk)\)
if \(G\) is for instance a compact connected Lie group or a connected complex algebraic group,
see Proposition\nobreakspace \ref {thm:equiv-formal-freeness}.
Many spaces are known to be equivariantly formal over~\(\R\),
for example compact Hamiltonian \(G\)-manifolds
for \(G\)~a compact connected Lie group
\cite{Frankel:1959},~\cite[Prop.~5.8]{Kirwan:1984},
or rationally smooth compact complex algebraic \(G\)-varieties
for \(G\)~a reductive connected algebraic group
\cite[Thm.~14.1]{GoreskyKottwitzMacPherson:1998},~\cite{Weber:2003}.

Let \(\Gamma\subset S_{n}\) be a subgroup.
The \(\Gamma\)-product~\(\GP{X}{\Gamma}\) of a topological space~\(X\) is the quotient of~\(X^{n}\)
by the canonical action of~\(\Gamma\), \cf~\cite[Def.~7.1]{Dold:1957}.\footnote{%
  Despite the similar notation, the \(\Gamma\)-product~\(\GP{X}{\Gamma}\)
  should not be confused with the fixed point set~\(X^{G}\)
  of the \(G\)-action on~\(X\).}
For~\(\Gamma=1\) one obtains the Cartesian product~\(X^{n}\)
and for~\(\Gamma=S_{n}\) the \(n\)-th symmetric product of~\(X\)
considered above for quasi-projective varieties.
Note that any continuous \(G\)-action on~\(X\) induces one on~\(\GP{X}{\Gamma}\).

Our generalization of Biswas--D'Mello's result now reads as follows:

\begin{theorem}
  \label{thm:symprodformal}
  Let \(G\) be a topological group and \(X\) a CW~complex with a continuous \(G\)-action.
  Let \(\kk\) be a field and \(\Gamma\) a subgroup of~\(S_{n}\) for some~\(n\ge0\).
  If \(X\) is equivariantly formal over~\(\kk\), then so is \(\GP{X}{\Gamma}\).
\end{theorem}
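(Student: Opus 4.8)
The plan is to reduce the statement to Dold's theorem that the \(\Gamma\)-product functor preserves \(\kk\)-homology equivalences, by reinterpreting the fibre inclusion of the Borel fibration of \(\GP{X}{\Gamma}\) as a factor of the \(\Gamma\)-product of the fibre inclusion of~\(X\) itself. Throughout I work over the fixed field~\(\kk\) and use the paper's definition: a \(G\)-space~\(Y\) is equivariantly formal iff the fibre inclusion \(Y\hookrightarrow Y_{G}\) induces a surjection \(H^{*}(Y_{G})\to H^{*}(Y)\); dualizing over~\(\kk\), this is equivalent to \(H_{*}(Y)\to H_{*}(Y_{G})\) being injective.

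First I record the geometric observation that drives everything. Write \(\iota_{0}\colon X\hookrightarrow X_{G}\), \(y\mapsto[e_{0},y]\), for the fibre inclusion of~\(X\) over a basepoint \([e_{0}]\in BG\), and \(\iota\colon\GP{X}{\Gamma}\hookrightarrow(\GP{X}{\Gamma})_{G}\) for the fibre inclusion of the induced \(G\)-space~\(\GP{X}{\Gamma}\). I claim there is a factorization \(\GP{\iota_{0}}{\Gamma}=\lambda\circ\iota\), where \(\GP{\iota_{0}}{\Gamma}\colon\GP{X}{\Gamma}\to\GP{X_{G}}{\Gamma}\) is the honest \(\Gamma\)-product of the map~\(\iota_{0}\), and \(\lambda\colon(\GP{X}{\Gamma})_{G}\to\GP{X_{G}}{\Gamma}\) is the natural map arising from the homeomorphism \((\GP{X}{\Gamma})_{G}=EG\timesG(X^{n}/\Gamma)\cong(X_{G})^{\times_{BG}n}/\Gamma\) followed by the inclusion of the fibrewise power \((X_{G})^{\times_{BG}n}\) into the full power~\((X_{G})^{n}\). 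Tracing a point \([x_{1},\dots,x_{n}]\) through both composites shows they agree on the nose. The essential content is that \(\lambda\) converts the fibrewise Borel construction~\((\GP{X}{\Gamma})_{G}\)—which the \(\Gamma\)-product machinery cannot see directly—into the genuine \(\Gamma\)-product~\(\GP{X_{G}}{\Gamma}\), to which Dold's theorem applies.

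Next I apply the \(\Gamma\)-product functor. By hypothesis \(X\) is equivariantly formal, so \((\iota_{0})_{*}\colon H_{*}(X)\to H_{*}(X_{G})\) is injective, hence a split monomorphism of graded vector spaces. By Dold's homotopy-invariance theorem, \(\GP{-}{\Gamma}\) carries \(\kk\)-homology equivalences to \(\kk\)-homology equivalences; at chain level this is realized by an honest functor preserving chain homotopy equivalences, which over~\(\kk\) descends to a functor \(\mathcal{F}^{\Gamma}\) on graded vector spaces together with a natural isomorphism \(H_{*}(\GP{Z}{\Gamma};\kk)\cong\mathcal{F}^{\Gamma}(H_{*}(Z;\kk))\). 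Any functor preserves split monomorphisms, so \((\GP{\iota_{0}}{\Gamma})_{*}=\mathcal{F}^{\Gamma}((\iota_{0})_{*})\) is again injective. This is exactly the step that absorbs the modular difficulty: when \(\mathrm{char}\,\kk\) divides~\(|\Gamma|\) the cohomology of \(\GP{X}{\Gamma}\) is not the ring of \(\Gamma\)-invariants and no averaging is available, yet Dold's functoriality holds over every field and needs no such hypothesis.

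Finally I combine the two ingredients. From the factorization, \((\GP{\iota_{0}}{\Gamma})_{*}=\lambda_{*}\circ\iota_{*}\); since the left-hand side is injective, so is \(\iota_{*}\colon H_{*}(\GP{X}{\Gamma})\to H_{*}((\GP{X}{\Gamma})_{G})\)—note that only the fact that \(\lambda\) is a map, not any property of it, is used here. Dualizing, the fibre restriction \(H^{*}((\GP{X}{\Gamma})_{G})\to H^{*}(\GP{X}{\Gamma})\) is surjective, which is precisely equivariant formality of~\(\GP{X}{\Gamma}\). I expect the main obstacle to be the honest verification that \(\GP{-}{\Gamma}\) descends, over a field, to a functor on graded vector spaces that is natural for arbitrary continuous maps and not merely for homology equivalences; this rests on Dold's chain-level construction together with the formality of complexes over a field. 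A few point-set matters—replacing \(X_{G}\) by a CW model and equipping \(X^{n}\) with a \(\Gamma\)-CW structure so that \(\GP{X}{\Gamma}\) is again a CW~complex—will also need to be dispatched, but should be routine given the standing CW hypothesis.
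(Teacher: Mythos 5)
Your proposal is correct and follows essentially the same route as the paper: your map \(\lambda\) is precisely the paper's map \(\beta\colon E\GG\timesGG\GP{\XX}{\Gamma}\to\GP{(E\GG\timesGG\XX)}{\Gamma}\), the factorization \(\GP{\iota_{0}}{\Gamma}=\lambda\circ\iota\) is the paper's commutative triangle \(\gamma=\beta\circ\alpha\), and your injectivity step is the paper's Lemma~\ref{thm:symprod-inj}, which is proved by realizing a retraction of \(H(\iota_{0})\) at the level of simplicial vector spaces rather than via your functor \(\mathcal{F}^{\Gamma}\) on graded vector spaces---over a field these come to the same thing. The point-set matters you defer at the end are where the paper does genuine extra work: it avoids asking whether \(X_{G}\) is a CW~complex by running the whole argument simplicially and then comparing \(H_{\SS(G)}^{*}(\SS(X))\) with \(H_{G}^{*}(X)\) by a spectral-sequence argument (Lemma~\ref{thm:HGG-HG}).
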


\begin{corollary}
  \label{thm:symprodmvar}
  If \(X\) is a quasi-projective M-variety,
  then \(\GP{X}{\Gamma}\) is an M-va\-ri\-ety.
  In particular, symmetric products of quasi-projective M-varieties
  are again M-varieties.
\end{corollary}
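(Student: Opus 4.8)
The plan is to deduce the corollary from Theorem~\ref{thm:symprodformal} by specializing the group to the cyclic group $C=\{1,\tau\}\cong\Z_2$ generated by the anti-holomorphic involution and the coefficient field to $\kk=\Z_2$. First I would record the two structural facts that make the statement meaningful. Since $X$ is quasi-projective, the Cartesian power $X^{n}$ is again quasi-projective, and the finite group $\Gamma\subseteq S_{n}$ acts on it by algebraic automorphisms; the standard fact that a finite quotient of a quasi-projective variety is again quasi-projective then shows that $\GP{X}{\Gamma}=X^{n}/\Gamma$ is a complex algebraic variety. Moreover, the diagonal involution on $X^{n}$ commutes with the coordinate permutations in~$\Gamma$, so it descends to an anti-holomorphic involution $\bar\tau$ on $\GP{X}{\Gamma}$; this is exactly the involution induced by the $C$-action in the sense of the theorem. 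In particular the assertion ``$\GP{X}{\Gamma}$ is an M-variety'' is well posed.

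With this in place the argument is a short chain of equivalences. Being an M-variety means that $X$ satisfies the equality~\eqref{eq:maximal}, and by Proposition~\ref{thm:betti-sum-equiv-cohom} this is equivalent to $X$ being equivariantly formal over~$\Z_{2}$ for the $C$-action given by~$\tau$. Theorem~\ref{thm:symprodformal}, applied with $G=C$ and $\kk=\Z_{2}$, then yields that $\GP{X}{\Gamma}$ is equivariantly formal over~$\Z_{2}$ for the induced $C$-action. Applying Proposition~\ref{thm:betti-sum-equiv-cohom} a second time, now to the pair $(\GP{X}{\Gamma},\bar\tau)$, translates this back into the equality~\eqref{eq:maximal} for $\GP{X}{\Gamma}$, that is, into the statement that $\GP{X}{\Gamma}$ is an M-variety. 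The final sentence of the corollary is just the special case $\Gamma=S_{n}$, for which $\GP{X}{\Gamma}=\SP{X}{n}$.

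The point that must be checked with care is that both directions of Proposition~\ref{thm:betti-sum-equiv-cohom} are genuinely available for the spaces in question, which is why it matters that $\GP{X}{\Gamma}$ is an honest complex algebraic variety rather than merely a topological quotient: the proposition asserts the equivalence between~\eqref{eq:maximal} and equivariant formality only for ``nice'' $C$-spaces such as algebraic varieties. I expect this to be the main (if modest) obstacle, since it is the one place where the algebraic hypotheses are essential. A secondary technical point is that Theorem~\ref{thm:symprodformal} requires a CW~complex with a continuous $G$-action; as a complex quasi-projective variety with an anti-holomorphic involution admits a $C$-equivariant triangulation, this hypothesis is met, and no further work beyond invoking the theorem is required.
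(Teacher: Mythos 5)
Your proposal is correct and follows essentially the same route as the paper: verify that \(\GP{X}{\Gamma}\) is again an algebraic variety (hence a finite-dimensional CW~complex with finite Betti sum), invoke the equivalence between maximality and equivariant formality over~\(\Z_{2}\) from Proposition~\ref{thm:betti-sum-equiv-cohom} in both directions, and apply Theorem~\ref{thm:symprodformal} with \(G=C\) and \(\kk=\Z_{2}\). The only cosmetic difference is that the paper cites Dolgachev for the quasi-projectivity of the quotient and Illman's equivariant triangulation theorems (via the proof of Proposition~\ref{thm:betti-sum-equiv-cohom}) for the CW~structure, where you argue the same points directly.
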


We have the following partial converse to Theorem\nobreakspace \ref {thm:symprodformal}.

\begin{proposition}
  \label{thm:converseGP}
  With the same notation as before,
  assume \(n\ge1\) and that \(X\) has fixed points.
  If \(\GP{X}{\Gamma}\) is equivariantly formal over~\(\kk\), then so is \(X\).
\end{proposition}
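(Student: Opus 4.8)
The plan is to exhibit a \(G\)-equivariant map \(\iota\colon X\to\GP{X}{\Gamma}\) whose underlying non-equivariant map is surjective in cohomology, and then to read off the equivariant formality of~\(X\) from that of~\(\GP{X}{\Gamma}\) by a short diagram chase. Since \(X\) has a fixed point, I fix \(x_{0}\in X\) with \(g\cdot x_{0}=x_{0}\) for all \(g\in G\), and (using \(n\ge1\)) define \(\iota\) by sending \(x\) to the class of \((x,x_{0},\dots,x_{0})\in X^{n}\) in the quotient \(\GP{X}{\Gamma}=X^{n}/\Gamma\). Because \(x_{0}\) is fixed and \(G\) acts diagonally on~\(X^{n}\), the map \(\iota\) is \(G\)-equivariant, and it carries \(x_{0}\) to a fixed point of~\(\GP{X}{\Gamma}\).

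Applying the Borel construction turns \(\iota\) into a map \(\iota_{G}\colon X_{G}\to(\GP{X}{\Gamma})_{G}\) over~\(BG\) that commutes with the fibre inclusions. Writing \(j_{X}\colon X\hookrightarrow X_{G}\) and \(j\colon\GP{X}{\Gamma}\hookrightarrow(\GP{X}{\Gamma})_{G}\), naturality of the fibre inclusion gives \(\iota^{*}\circ j^{*}=j_{X}^{*}\circ\iota_{G}^{*}\) in singular cohomology. By assumption \(\GP{X}{\Gamma}\) is equivariantly formal, so \(j^{*}\) is surjective. Hence, once we know that the non-equivariant map \(\iota^{*}\colon H^{*}(\GP{X}{\Gamma};\kk)\to H^{*}(X;\kk)\) is surjective, the left-hand composite \(\iota^{*}\circ j^{*}\) is surjective, which forces \(j_{X}^{*}\) to be surjective as well; that is exactly the equivariant formality of~\(X\). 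Thus everything reduces to the non-equivariant claim that \(\iota^{*}\) is onto.

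For that claim I would forget the group action entirely and factor \(\iota\) through the full symmetric product: the further quotient \(\pi\colon\GP{X}{\Gamma}=X^{n}/\Gamma\to X^{n}/S_{n}=\SP{X}{n}\) satisfies \(\pi\circ\iota=\iota_{S}\), the basepoint inclusion \(x\mapsto[x,x_{0},\dots,x_{0}]\) of~\(X\) into its symmetric product. Since \(\iota^{*}\circ\pi^{*}=\iota_{S}^{*}\), it suffices to show that \(\iota_{S}^{*}\) is surjective, equivalently (over a field) that \(\iota_{S}\) induces an injection on reduced homology over~\(\kk\). This is the classical splitting for symmetric products: the basepoint inclusion realizes \(\tilde H_{*}(X;\kk)\) as a natural direct summand of \(\tilde H_{*}(\SP{X}{n};\kk)\), \cf~\cite{Dold:1957}. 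I expect the real content to lie precisely here, in the \emph{modular} case where \(\mathrm{char}\,\kk\) divides~\(|\Gamma|\): away from it one may identify \(H^{*}(\GP{X}{\Gamma};\kk)\) with the \(\Gamma\)-invariants of \(H^{*}(X^{n};\kk)\) and split off the relevant tensor factor directly, but when \(\mathrm{char}\,\kk\) divides~\(|\Gamma|\) the averaging transfer is unavailable and one must genuinely invoke Dold's computation of the homology of \(\Gamma\)-products to obtain the splitting. The hypotheses \(n\ge1\) and the existence of a fixed point enter exactly in making \(\iota\) defined and equivariant, which is why both are needed.
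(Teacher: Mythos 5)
Your proof is correct and its overall architecture coincides with the paper's: the same basepoint-slice map \(x\mapsto[x,x_{0},\dots,x_{0}]\) with \(x_{0}\) a fixed point (which is where \(n\ge1\) and \(X^{G}\ne\emptyset\) enter), followed by the same commutative-square chase relating the fibre restrictions for \(X\) and \(\GP{X}{\Gamma}\). Where you diverge is in how the key non-equivariant input --- injectivity of the slice map in homology --- is established. The paper proves this directly for an arbitrary subgroup \(\Gamma\subset S_{n}\) (its Lemma~\ref{thm:surjection}), using Dold's decomposition of \((V\oplus W)^{\otimes n}\) into \(\Gamma\)-stable summands \((V,W)^{1}\) and the resulting \(b\) copies of \(\tilde H(X)\otimes\kk^{\otimes(n-1)}\) inside \(H(\GP{X}{\Gamma})\), where \(b\) is the number of \(\Gamma\)-orbits in \(\{1,\dots,n\}\); this lemma is then reused verbatim for the converse direction of Proposition~\ref{thm:trivactioncohom}. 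You instead compose with the further quotient \(\GP{X}{\Gamma}=X^{n}/\Gamma\to X^{n}/S_{n}=\SP{X}{n}\) and observe that surjectivity of \(\iota^{*}\circ\pi^{*}=\iota_{S}^{*}\) forces surjectivity of \(\iota^{*}\), thereby reducing everything to the classical fact that the basepoint inclusion \(X\to\SP{X}{n}\) splits off \(\tilde H_{*}(X;\kk)\) from \(\tilde H_{*}(\SP{X}{n};\kk)\) \cite{Dold:1957}. That reduction is valid and lets you quote a standard result (the \(\Gamma=S_{n}\), \(b=1\) case of the paper's lemma) instead of redoing Dold's bookkeeping for general \(\Gamma\); the trade-off is that the paper's direct lemma gives slightly more information (the count \(b\) of copies of \(\tilde H(X)\)) and serves double duty elsewhere in the paper. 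Your remarks about the modular versus non-modular case are accurate but not needed for the argument.
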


\begin{example}
  Let \(X=\CP^{1}\). Then \(\SP{X}{n}\) is homeomorphic to~\(\CP^{n}\),
  as can be seen by identifying \(\C^{n+1}\) with complex binary forms
  of degree~\(n\) and invoking the fundamental theorem of algebra.

  Let \(\tau\) be complex conjugation on~\(\CP^{1}\).
  This is an anti-holomorphic involution with fixed point set~\(\RP^{1}\);
  \(\CP^{1}\) therefore is maximal.
  The induced involution on~\(\CP^{n}\) is again complex conjugation.
  Hence \(\SP{X}{n}^{\tau}=\RP^{n}\), showing that \(\SP{X}{n}\) is maximal, too.

  Now consider the holomorphic involution on~\(X\) given in homogeneous coordinates
  by~\([x_{0}:x_{1}]\mapsto[x_{0}:-x_{1}]\). This action is also equivariantly
  formal over~\(\Z_{2}\) with the two fixed points~\([1:0]\) and~\([0:1]\).
  The induced involution on~\(\CP^{n}\) is
  \begin{equation}
    [x_{0}:\dots:x_{n}] \mapsto [x_{0}:-x_{1}:x_{2}:\dots:(-1)^{n}\,x_{n}].
  \end{equation}
  For~\(n\ge1\),
  its fixed point set is the disjoint union of~\(\CP^{k}\) and~\(\CP^{l}\),
  where \(k\) and~\(l\) are obtained by rounding \((n-1)/2\) respectively up and down
  to the next integer. The Betti sum of the fixed point set
  is \((k+1)+(l+1)=n+1\). This is the same as for~\(\CP^{n}\),
  which again confirms Theorem\nobreakspace \ref {thm:symprodformal}.

  Finally, let \(\tau\) be the anti-holomorphic involution given by~\([x_{0}:x_{1}]\mapsto[\bar x_{1}:-\bar x_{0}]\).
  It is fixed-point free and corresponds to the antipodal map on~\(S^{2}\).
  The fixed point set of~\(\SP{X}{2}=\CP^{2}\) is homeomorphic to
  the orbit space~\(X/C\approx\RP^{2}\). Hence \(\SP{X}{2}\) is maximal, while \(X\) itself is not.
  This illustrates that Proposition\nobreakspace \ref {thm:converseGP} may fail for actions without fixed point.
\end{example}

A \(G\)-space~\(X\) is equivariantly formal if and only if
\(G\) acts trivially on~\(H^{*}(X;\kk)\) and 
the Serre spectral sequence for the bundle~\(X\to X_{G}\to BG\) with coefficients in~\(\kk\)
degenerates at the second page
\cite[Prop.~III.1.17]{tomDieck:1987},~\cite[\S VI.5.5]{Lamotke:1968}.
One can study these two conditions separately.

\begin{proposition}
  \label{thm:trivactioncohom}
  Assume the same notation as in Theorem\nobreakspace \ref {thm:symprodformal}.
  If \(G\) acts trivially on~\(H^{*}(X)\), then so it does on~\(H^{*}(\GP{X}{\Gamma})\).
  The converse holds if \(X^{G}\ne\emptyset\) and \(n\ge1\).
\end{proposition}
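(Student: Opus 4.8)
The plan is to prove the two implications separately, in each case reducing everything to the functoriality of the $\Gamma$-product in the space~$X$.

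For the forward implication, fix $g\in G$ and let $g^{\Gamma}\colon\GP{X}{\Gamma}\to\GP{X}{\Gamma}$ be the map obtained from $g\colon X\to X$ by functoriality of the $\Gamma$-product; since the $G$-action on $H^{*}(\GP{X}{\Gamma})$ is given by the maps $(g^{\Gamma})^{*}$, it suffices to show that each of these is the identity. Here is the key point: over a field, a self-map of a space inducing the identity on homology induces a chain map on singular chains that is chain homotopic to the identity, because every complex over~\(\kk\) is chain homotopy equivalent to its homology (viewed with zero differential), on which chain-homotopic maps coincide. Thus the hypothesis gives \(C_{*}(g;\kk)\simeq\mathrm{id}\). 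I would then invoke Dold's theorem that \(\tilde H_{*}(\GP{X}{\Gamma};\kk)\) is computed by the derived functor, in the sense of Dold--Puppe, of the algebraic \(\Gamma\)-product functor \(C_{*}\mapsto(C_{*}^{\otimes n})_{\Gamma}\) applied to \(C_{*}(X;\kk)\), \cf~\cite{Dold:1957}. Derived functors are homotopy functors, so the chain-homotopic maps \(C_{*}(g)\simeq\mathrm{id}\) induce the same map, namely the identity, on \(\tilde H_{*}(\GP{X}{\Gamma};\kk)\); hence \((g^{\Gamma})^{*}=\mathrm{id}\), as desired.

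The hard part is exactly the homotopy-invariance invoked above. The naive chain homotopy between \(C_{*}(g)^{\otimes n}\) and the identity on \(C_{*}(X)^{\otimes n}=C_{*}(X^{n})\) is built by a telescoping construction that fails to be \(\Gamma\)-equivariant, and this failure is precisely what gives rise to the Steenrod operations in \(H^{*}(\GP{X}{\Gamma})\). The content of Dold's theorem is that, after replacing \(C_{*}(X)\) by a \(\Gamma\)-equivariantly free simplicial resolution, one recovers an equivariant homotopy, so that \(H_{*}(\GP{X}{\Gamma};\kk)\) really is a homotopy functor of \(C_{*}(X;\kk)\). This is the step I would lean on rather than reprove; it is also where the field hypothesis is essential.

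For the converse, assume \(n\ge1\) and choose a fixed point \(x_{0}\in X^{G}\). Inserting \(x_{0}\) in the last \(n-1\) slots defines \(\iota\colon X\to\GP{X}{\Gamma}\), \(\iota(x)=\pi(x,x_{0},\dots,x_{0})\), where \(\pi\colon X^{n}\to\GP{X}{\Gamma}\) is the quotient map; because \(x_{0}\) is \(G\)-fixed, \(\iota\) is \(G\)-equivariant, so \(\iota^{*}\colon H^{*}(\GP{X}{\Gamma};\kk)\to H^{*}(X;\kk)\) commutes with the two \(G\)-actions. It then suffices to show \(\iota^{*}\) is surjective, for then triviality of the action on the source forces it on the target. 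To prove surjectivity I would factor \(\iota\) through the further quotient \(\GP{X}{\Gamma}\to\SP{X}{n}\) and the stabilization \(\SP{X}{n}\to\SP{X}{\infty}\): the resulting composite \(X\to\SP{X}{\infty}\) is the standard bottom-cell inclusion, which by the Dold--Thom theorem is split injective on reduced homology over~\(\kk\), the classes of \(\tilde H_{*}(X;\kk)\) being the polynomial generators of \(\tilde H_{*}(\SP{X}{\infty};\kk)\) and hence splitting off in each degree. A composite that is split injective forces its first factor \(\iota_{*}\) to be split injective, whence \(\iota^{*}\) is surjective. The obstacle here is again purely structural, namely that the bottom of the symmetric-product filtration splits off on homology, but this is classical and needs no assumption on the characteristic.
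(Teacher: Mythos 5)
Your forward implication is essentially the paper's argument: the paper likewise deduces from \(H(a_{g})=\mathrm{id}\) that the normalized chain map is chain homotopic to the identity (this is where the field hypothesis enters), transports this to a simplicial homotopy on the simplicial vector space \(C(X)\) via the Dold--Kan correspondence, and then invokes Dold's theorem that the \(\Gamma\)-product functor on simplicial vector spaces preserves homotopy; your ``derived functor of the algebraic \(\Gamma\)-product'' is the same mechanism in Dold--Puppe language, and you correctly isolate the non-equivariance of the naive tensor-power homotopy as the reason Dold's machinery is needed. Where you genuinely diverge is the converse. The paper proves (Lemma~\ref{thm:surjection}) that \(x\mapsto[x,y,\dots,y]\) is split injective on homology by an \emph{internal} decomposition: the basepoint splitting \(C(X)=\tilde C(X)\oplus\kk\) exhibits \(b\) direct summands of the form \(\tilde H(X)\otimes\kk^{\otimes(n-1)}\) inside \(H(\GP{X}{\Gamma})\), where \(b\) is the number of \(\Gamma\)-orbits in \(\{1,\dots,n\}\), and the map hits one of them isomorphically. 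You instead compose with \(\GP{X}{\Gamma}\to\SP{X}{n}\to\SP{X}{\infty}\) and quote the bottom-cell splitting for the infinite symmetric product. That is a valid shortcut, with one imprecision: your justification that the classes of \(\tilde H_{*}(X;\kk)\) are ``the polynomial generators of \(\tilde H_{*}(\SP{X}{\infty};\kk)\)'' is only accurate in characteristic zero, since mod \(p\) the homology of \(\SP{X}{\infty}\simeq\prod_{m} K(\tilde H_{m}(X;\Z),m)\) has many further generators coming from Steenrod-type operations. The split injectivity of \(X\to\SP{X}{\infty}\) on reduced homology with arbitrary coefficients is nonetheless classical (it already follows from Dold's splitting of \(H(\SP{X}{m})\hookrightarrow H(\SP{X}{m+1})\)), so your conclusion stands; the paper's route stays inside \(\GP{X}{\Gamma}\) and yields the finer statement about the \(b\) summands, while yours reduces everything to a single well-known stable fact.
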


The degeneration of the Serre spectral sequence is a more delicate matter.
Consider again a compact connected Riemann surface~\(X\)
with an anti-holo\-mor\-phic involution~\(\tau\),
and assume that it has fixed points. In this context,
Baird~\cite[Prop.~3.9]{Baird} has recently shown that
the Serre spectral sequence with coefficients in~\(\Z_{2}\)
for the Borel construction of any symmetric product of~\(X\)
degenerates at the second page.
It is not difficult to extend this to compact Riemann surfaces
that are not connected or without fixed points.
Recall that \(C=\{1,\tau\}\).

\begin{proposition}
  \label{thm:SPm-weakly-tight}
  Let \(X\) be a compact, not necessarily connected Riemann surface
  with an anti-holomorphic involution~\(\tau\).
  If the Serre spectral sequence for~\(X_{C}\)
  with coefficients in~\(\Z_{2}\)
  degenerates at the second page,
  then so does the one for~\(\SP{X}{n}_{C}\).
\end{proposition}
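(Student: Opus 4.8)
The plan is to reduce everything to Baird's case of a connected surface with fixed points by decomposing~\(X\) into the \(\tau\)-orbits of its connected components and analysing each orbit separately. Write \(X=\bigsqcup_{\alpha}X_{\alpha}\) as a union of connected components. Since \(\tau\) is a homeomorphism it permutes the~\(X_{\alpha}\), so the components group into \(C\)-stable blocks~\(Y_{j}\) of two kinds: single \(\tau\)-invariant components, and interchanged pairs \(\{X_{\alpha},\tau X_{\alpha}\}\). Applying the natural \(C\)-equivariant homeomorphism \(\SP{(X'\sqcup X'')}{n}\cong\bigsqcup_{i+j=n}\SP{X'}{i}\times\SP{X''}{j}\) repeatedly yields a \(C\)-equivariant decomposition \(\SP{X}{n}\cong\bigsqcup_{\sum m_{j}=n}\prod_{j}\SP{Y_{j}}{m_{j}}\) with \(C\) acting diagonally on each product. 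As the Borel construction sends disjoint unions to disjoint unions and the Serre spectral sequence of a disjoint union is the direct sum of the pieces, it suffices to show that the spectral sequence for \(\bigl(\prod_{j}\SP{Y_{j}}{m_{j}}\bigr)_{C}\) degenerates for every choice of the~\(m_{j}\).

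This leaves two tasks. The first is to treat a single block. For an invariant component~\(X_{\alpha}\) the hypothesis forces fixed points: the fibre is connected, so the bottom row \(q=0\) of its \(E_{2}\)-page is the whole polynomial ring \(H^{*}(BC;\Z_{2})\) and hence infinite-dimensional, whereas a fixed-point-free~\(\tau\) would make \(H_{C}^{*}(X_{\alpha};\Z_{2})=H^{*}(X_{\alpha}/C;\Z_{2})\) finite-dimensional, precluding degeneration. Baird's Proposition then gives degeneration of every \(\SP{X_{\alpha}}{m}_{C}\). For an interchanged pair \(Y_{j}=X_{\alpha}\sqcup\tau X_{\alpha}\) the \(C\)-action is free; decomposing \(\SP{Y_{j}}{m}\) as above, the off-diagonal summands are again interchanged freely, so their cohomology local systems are induced and therefore free over~\(\Z_{2}[C]\), whence \(H^{>0}(BC;-)\) vanishes and the \(E_{2}\)-page sits in the single column \(p=0\) and degenerates; for even~\(m\) the diagonal summand is the swap action on \(\SP{X_{\alpha}}{m/2}\times\SP{X_{\alpha}}{m/2}\), which is equivariantly formal over~\(\Z_{2}\) by the classical structure of the extended power construction.

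The second and harder task is multiplicativity: deducing degeneration for the diagonal product \(\prod_{j}\SP{Y_{j}}{m_{j}}\) from degeneration of its factors. I would form the external product of the fibrations \((\SP{Y_{j}}{m_{j}})_{C}\to BC\), a fibration over~\((BC)^{k}\) whose Serre spectral sequence is the tensor product of the factor spectral sequences and hence degenerates, and then pull back along the diagonal \(BC\to(BC)^{k}\), obtaining a morphism onto the spectral sequence of the fibre product \(\bigl(\prod_{j}\SP{Y_{j}}{m_{j}}\bigr)_{C}\).

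The main obstacle is that this comparison need not be surjective on~\(E_{2}\) once \(C\) acts nontrivially on the cohomology of the factors, because the cup-product map \(\bigotimes_{j}H^{*}(C;M_{j})\to H^{*}(C;\bigotimes_{j}M_{j})\) is not surjective in general, as the identity \(R\otimes R\cong R\oplus R\) for the regular module~\(R\) already shows. To get around this I would exploit that over~\(\Z_{2}\) every \(\Z_{2}[C]\)-module is a sum of trivial and free modules, so that the tensor products reduce to the two cases \(R\otimes R\cong R\oplus R\) and trivial-by-anything, and then argue with Poincaré series: recording the trivial and free multiplicities of \(H^{*}(\SP{Y_{j}}{m_{j}})\) by polynomials \(A_{j}(s),B_{j}(s)\), degeneration of each factor means its abutment series equals \(B_{j}+A_{j}/(1-s)\), and the localization theorem pins the free \(\Z_{2}[t]\)-rank of the fibre product to \(\prod_{j}A_{j}(1)\) via \((\prod_{j}W_{j})^{C}=\prod_{j}W_{j}^{C}\). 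Matching the full abutment series of the fibre product against the \(E_{2}\)-series computed from the tensor product of modules—equivalently, showing that degeneration survives the diagonal pullback despite the non-simply-connected base~\(BC\)—is where the real work lies.
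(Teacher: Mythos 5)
Your overall skeleton --- split \(X\) into \(\tau\)-stable blocks of connected components, reduce invariant components to Baird's result (your argument that degeneration forces fixed points on a connected invariant component is fine), and handle a transposed pair via the decomposition of \(\SP{Y\sqcup Z}{n}\) into off-diagonal, freely permuted summands and a diagonal swap summand --- is the same as the paper's. But the proof is not complete, and the step you yourself flag as ``where the real work lies'' is precisely the one the paper's argument is designed to avoid. The paper never compares Serre spectral sequences of products directly. Instead it runs everything through Baird's numerical criterion (his Prop.~3.7, extended from compact \(\Z_{2}\)-manifolds to finite \(C\)-CW complexes): the Serre spectral sequence for \(Y_{C}\) degenerates at \(E_{2}\) if and only if \(\dim H^{*}(Y^{\tau})=\dim H^{*}(Y)\triv\). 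This criterion is additive under \(C\)-stable disjoint unions and, crucially, multiplicative under diagonal products, because \((V\otimes W)\triv\cong V\triv\otimes W\triv\) over \(\Z_{2}\), \((Y\times Z)^{\tau}=Y^{\tau}\times Z^{\tau}\), and the K\"unneth formula applies on both sides of the equality. That one observation disposes of your entire third and fourth paragraphs: there is no need to pull back an external product along the diagonal \(BC\to(BC)^{k}\), no non-surjective comparison map on \(E_{2}\) to worry about, and no Poincar\'e-series bookkeeping. Your proposed workaround (trivial-plus-free decomposition of \(\Z_{2}[C]\)-modules plus localization) is morally a re-derivation of the same identity, but as written it is a program rather than a proof; in particular you give no argument controlling the abutment of the diagonal pullback over the non-simply-connected base, which is exactly the point you leave open.

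Two smaller corrections. First, the swap action on \(W\times W\) is in general \emph{not} equivariantly formal over \(\Z_{2}\): equivariant formality requires the induced action on \(H^{*}(W\times W)\) to be trivial, which fails for the swap whenever \(\dim H^{*}(W)>1\). What is true, and all you need, is degeneration of the Serre spectral sequence, which follows from the criterion together with the identity \((U\otimes U)\triv\cong U\) for the swap action on \(U\otimes U\), the fixed set being the diagonal \(\approx W\). Second, when you reassemble the blocks you must actually show that a product of spaces each satisfying the degeneration condition again satisfies it; with the multiplicativity of the criterion this is immediate, but without it your reduction to single blocks establishes nothing about \(\prod_{j}\SP{Y_{j}}{m_{j}}\), and hence nothing about \(\SP{X}{n}\).
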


\begin{question}[Baird]
  Let the notation be as in Theorem\nobreakspace \ref {thm:symprodformal}.
  If the Serre spectral sequence for~\(X_{G}\) with coefficients in~\(\kk\) degenerates at the second page,
  does the same hold true for the Borel construction of~\(\GP{X}{\Gamma}\)\,?
\end{question}

In a different direction, the notion of equivariant formality
has been extended to that of a syzygy in equivariant cohomology
by Allday--Franz--Puppe~\cite{AlldayFranzPuppe:orbits1} (\(G\)~a torus)
and Franz~\cite{Franz:nonab} (\(G\) a compact connected Lie group).
Let \(r\) be the rank of such a~\(G\),
so that \(H^{*}(BG;\R)\) is a polynomial algebra in \(r\)~variables of even degrees.
For~\(1\le k\le r\),
the \(k\)-th syzygies over~\(H^{*}(BG;\R)\) interpolate between torsion-free modules
(\(k=1\)) and free ones (\(k=r\)).
Since a \(G\)-space~\(X\) is equivariantly formal over~\(\R\)
if and only if \(H_{G}^{*}(X;\R)\) is a free module over~\(H^{*}(BG;\R)\),
Theorem\nobreakspace \ref {thm:symprodformal} can be restated as follows: If \(H_{G}^{*}(X;\R)\)
is an \(r\)-th syzygy over~\(H^{*}(BG;\R)\), then so is \(H_{G}^{*}(\GP{X}{\Gamma};\R)\).
The following example, whose details appear at the end of the paper, shows that this result does not extend to smaller syzygy orders.

\begin{example}
  \label{ex:SP2-SigmaT}
  Let \(G=(S^{1})^{r}\) be a torus;
  \(H^{*}(BG;\R)\) is a polynomial algebra in \(r\)~indeterminates of degree~\(2\) over~\(\R\).
  Let \(X=\Sigma G\), the suspension of the torus.
  Then \(H_{G}^{*}(X;\R)\) is given by pairs of polynomials with the same constant term.
  Hence, \(H_{G}^{*}(X;\R)\) is torsion-free, but not free for~\(r\ge2\) (not even a second syzygy),
  see~\cite[Example~3.3]{Allday:2008}.

  There is a \(G\)-stable filtration of~\(\SP{X}{2}\) of length~\(2\)
  whose associated spectral sequence converging to~\(H_{G}^{*}(\SP{X}{2};\R)\)
  has the the property
  that second column of the limit page is finite-dimensional over~\(\R\) and non-zero for~\(r\ge3\).
  Since this column
  is an \(H^{*}(BG;\R)\)-submodule of~\(H_{G}^{*}(\SP{X}{2};\R)\),
  it follows that \(H_{G}^{*}(\SP{X}{2};\R)\) has torsion for~\(r\ge3\).
\end{example}

\begin{acknowledgements}
  I thank Tom Baird and Volker Puppe for stimulating discussions.
  I am particularly indebted to Volker Puppe for drawing my attention
  to Dold's work~\cite{Dold:1957}
  and for suggesting to look at the \(G\)-action in the cohomology of~\(X\).
\end{acknowledgements}

\section{Proofs}

We are going to show that Theorem\nobreakspace \ref {thm:symprodformal} is a consequence
of Dold's results~\cite{Dold:1957} about the homology of symmetric products.
For the sake of completeness, let us first justify the claims made previously
regarding the Betti sum of the fixed point set
and equivariant formality.
In this section, all (co)homology is taken with coefficients in a field~\(\kk\).

\begin{proposition}
  \label{thm:betti-sum-equiv-cohom}
  Let \(p\) be a prime and \(r\in\N\). Let \(\kk=\Z_{p}\), \(G=(\Z_{p})^{r}\) and
  \(X\) a smooth \(G\)-manifold or real analytic \(G\)-variety
  with finite Betti sum. Then
  \begin{equation*}
    \dim H^{*}(X^{G}) \le \dim H^{*}(X)
  \end{equation*}
  with equality
  if and only if \(X\) is equivariantly formal.
\end{proposition}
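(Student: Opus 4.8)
The plan is to run the standard Borel--Quillen localization argument, phrasing everything in terms of the rank of \(H_G^*(X)\) as a module over \(R=H^*(BG)\). Write \(\mathfrak p_0=\sqrt0\subset R\) for the unique minimal prime: for \(p=2\) we have \(R=\kk[t_1,\dots,t_r]\) and \(\mathfrak p_0=0\), while for odd~\(p\) the ring \(R\) also carries the nilpotent exterior generators and \(\mathfrak p_0=(t_1,\dots,t_r)\), with \(R/\mathfrak p_0=\kk[s_1,\dots,s_r]\) a polynomial ring. In both cases one checks that the elements of \(R\setminus\mathfrak p_0\) are non-zerodivisors (they have nonzero determinant as \(\kk[s]\)-linear endomorphisms of the free module~\(R\)), so the localization \(R_{\mathfrak p_0}\) is an Artinian local ring and every free \(R\)-module is \(\mathfrak p_0\)-torsion-free. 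Because \(R\) is Noetherian and \(X\) has finite Betti sum, the Serre spectral sequence shows \(H_G^*(X)\) is a finitely generated \(R\)-module; I define its rank as \(\operatorname{length}_{R_{\mathfrak p_0}}H_G^*(X)_{\mathfrak p_0}\) divided by \(\operatorname{length}_{R_{\mathfrak p_0}}R_{\mathfrak p_0}\). Since localization at \(\mathfrak p_0\) is exact and length is additive, this rank is additive on short exact sequences and equals \(n\) on~\(R^n\).

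First I would identify the left-hand side with this rank. As \(G\) acts trivially on~\(X^G\), the Borel fibration over~\(X^G\) is a product, so \(H_G^*(X^G)=H^*(X^G)\otimes_\kk R\) and \(\operatorname{rank}_R H_G^*(X^G)=\dim H^*(X^G)\). The hypothesis that \(X\) is a smooth \(G\)-manifold or a real analytic \(G\)-variety of finite Betti sum is exactly what guarantees that \(X^G\) is again such a space (a closed submanifold, resp.\ a subvariety) of finite Betti sum and that the localization theorem applies: restriction \(H_G^*(X)\to H_G^*(X^G)\) becomes an isomorphism after localizing at~\(\mathfrak p_0\). Hence \(\operatorname{rank}_R H_G^*(X)=\dim H^*(X^G)\). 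I expect invoking the localization theorem in precisely this setting to be the one substantial external input.

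Next I would bound the rank by the right-hand side via the Serre spectral sequence \(E_2^{s,t}=H^s(BG;\mathcal H^t(X))\Rightarrow H_G^*(X)\). Since \(G\) is a \(p\)-group and \(\operatorname{char}\kk=p\), its action on \(H^*(X)\) is unipotent; filtering \(H^*(X)\) by \(G\)-submodules with trivial subquotients reduces the estimate to trivial coefficients, where \(E_2=R\otimes_\kk H^*(X)\) is free of rank \(\dim H^*(X)\). Localizing the spectral sequence at~\(\mathfrak p_0\) (again exact) and using additivity of rank gives \(\operatorname{rank}E_{k+1}=\operatorname{rank}E_k-2\operatorname{rank}(\operatorname{im}d_k)\le\operatorname{rank}E_k\), while \(\operatorname{rank}E_\infty=\operatorname{rank}H_G^*(X)\) because rank is unchanged on passing to the associated graded module. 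Combining with the previous step,
\[
  \dim H^*(X^G)=\operatorname{rank}_R H_G^*(X)=\operatorname{rank}E_\infty\le\operatorname{rank}E_2=\dim H^*(X),
\]
which is the asserted inequality.

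Finally, for the equality clause: if \(X\) is equivariantly formal the spectral sequence degenerates, \(H_G^*(X)\cong R\otimes_\kk H^*(X)\) is free of rank \(\dim H^*(X)\), and equality follows from the displayed identities. The hard part is the converse. Equality forces \(\operatorname{rank}E_\infty=\operatorname{rank}E_2\); as the page ranks are non-increasing and agree at both ends, they are constant, so \(\operatorname{rank}(\operatorname{im}d_k)=0\) for every~\(k\). I would then argue by induction that each page equals \(E_2\) and is free: given \(E_k=E_2\) free, \(\operatorname{im}d_k\) is a rank-zero, hence \(\mathfrak p_0\)-torsion, submodule of the torsion-free module~\(E_k\), so \(\operatorname{im}d_k=0\) and \(E_{k+1}=E_k\). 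Thus the spectral sequence degenerates; the accompanying bookkeeping with the unipotent filtration---which equality likewise forces to split, so that \(G\) acts trivially on \(H^*(X)\)---then yields equivariant formality. Ruling out such torsion-valued higher differentials is the step requiring the most care, and it is exactly the absence of \(\mathfrak p_0\)-torsion in free \(R\)-modules that makes it work.
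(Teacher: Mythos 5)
Your proposal is correct in outline, but it takes a genuinely different route from the paper: the paper's proof of this proposition is a two-line reduction, citing tom Dieck's Prop.~III.4.16 (which asserts precisely this inequality and equality criterion for finite-dimensional \(G\)-CW complexes with finite Betti sum) and then invoking Illman's equivariant triangulation theorems to see that smooth \(G\)-manifolds and real (sub)analytic \(G\)-varieties are \(G\)-CW complexes. What you have written is essentially a sketch of the proof of the cited result itself: localization of \(H^{*}(BG)\) at its minimal prime, the rank count through the Serre spectral sequence, and torsion-freeness of free modules to kill the higher differentials. Two points deserve attention. First, the ``one substantial external input'' you flag is really the same input the paper uses: the localization theorem requires \(X\) to be, say, a finite-dimensional \(G\)-CW complex (or paracompact of finite cohomological dimension), and it is Illman's triangulation results that extract this from the smooth/real-analytic hypothesis; you should make that reduction explicit rather than leaving it at ``the localization theorem applies''. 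Second, the step you defer in the converse --- that equality forces \(G\) to act trivially on \(H^{*}(X)\) --- is the genuinely delicate point and needs its own computation: for a nonsplit extension of trivial modules the connecting homomorphism in group cohomology is cup product with the nonzero extension class in \(H^{1}(G;\kk)\) (or with its Bockstein for odd \(p\)), and this operator has positive rank after localizing at \(\mathfrak{p}_{0}\), so equality of ranks rules out any nontrivial subquotient. With that filled in, your argument is a valid self-contained proof; what the paper's citation buys is brevity, and what yours buys is independence from the blanket reference and visibility of where each hypothesis is used.
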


\begin{proof}
  This holds in fact for a much larger class of \(G\)-spaces~\(X\) including
  finite-dimensional \(G\)-CW~complexes with finite Betti sum,
  \cf~\cite[Prop.~III.4.16]{tomDieck:1987}.
  It thus suffices to observe that smooth \(G\)-manifolds
  and real analytic (even subanalytic) \(G\)-varieties
  are \(G\)-CW complexes,
  see \cite[Thm., p.~199]{Illman:1978}
  and~\cite[Cor.~11.6]{Illman:2000}.
\end{proof}

\begin{proposition}
  \label{thm:equiv-formal-freeness}
  Let \(G\) be a connected group with homology of finite type
  and let \(X\) be a \(G\)-space.
  Then \(X\) is equivariantly formal over~\(\kk\) if and only if \(H_{G}^{*}(X)\)
  is a free \(H^{*}(BG)\)-module.
\end{proposition}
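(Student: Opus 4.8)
The plan is to run the whole argument through the Serre spectral sequence of the Borel fibration \(X\hookrightarrow X_{G}\to BG\), exploiting that connectedness of~\(G\) trivialises the coefficient system. Since \(G\) is connected, every element is homotopic to the identity, so \(G\) acts trivially on \(H^{*}(X)\); equivalently, \(\pi_{1}(BG)=\pi_{0}(G)\) is trivial and \(BG\) is simply connected. Hence the spectral sequence has \(E_{2}^{p,q}=H^{p}(BG)\otimes H^{q}(X)\), which is a free module over \(A:=H^{*}(BG)\). Writing \(i\colon X\hookrightarrow X_{G}\) for the fibre inclusion, recall that equivariant formality means precisely that \(i^{*}\colon H_{G}^{*}(X)\to H^{*}(X)\) is surjective.

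For the implication that equivariant formality yields freeness, I would invoke the Leray--Hirsch theorem: because the coefficient system is trivial and \(i^{*}\) is onto, a choice of homogeneous classes in \(H_{G}^{*}(X)\) restricting to a \(\kk\)-basis of \(H^{*}(X)\) determines an isomorphism of \(A\)-modules \(A\otimes H^{*}(X)\xrightarrow{\sim}H_{G}^{*}(X)\); in particular \(H_{G}^{*}(X)\) is free.

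For the converse I would argue that freeness of \(H_{G}^{*}(X)\) over~\(A\) makes \(i^{*}\) surjective. The cleanest route is the Eilenberg--Moore spectral sequence of the pullback square obtained by restricting \(X_{G}\to BG\) along the basepoint \(\mathrm{pt}\to BG\), whose fibre product is~\(X\); since \(BG\) is simply connected this spectral sequence converges, with \(E_{2}=\Tor_{A}(\kk,H_{G}^{*}(X))\Rightarrow H^{*}(X)\). If \(H_{G}^{*}(X)\) is free over~\(A\), then \(\Tor_{A}^{>0}(\kk,H_{G}^{*}(X))=0\), so the spectral sequence collapses onto the single column \(\kk\otimes_{A}H_{G}^{*}(X)=H_{G}^{*}(X)/\bar{A}\,H_{G}^{*}(X)\), and its edge homomorphism identifies this quotient with \(H^{*}(X)\) via \(i^{*}\). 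Thus \(i^{*}\) is surjective, which is equivariant formality.

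The main obstacle is exactly this converse direction. An elementary alternative, avoiding Eilenberg--Moore, is to work inside the Serre spectral sequence itself: the differentials \(d_{r}\) are \(A\)-linear, \(E_{\infty}=\operatorname{gr}H_{G}^{*}(X)\), and the base filtration is finite in each total degree, so freeness of \(H_{G}^{*}(X)\) makes \(E_{\infty}\) a free \(A\)-module of the same graded rank as the free module \(E_{2}=A\otimes H^{*}(X)\); a rank comparison then forces every \(d_{r}\) to vanish, i.e.\ degeneration at~\(E_{2}\), which (the action being trivial) is equivalent to equivariant formality. In either approach the delicate point is finiteness bookkeeping: one must use the finite-type hypothesis on \(H_{*}(G)\), hence on~\(A\), to make the rank comparison (or the spectral-sequence convergence) legitimate, and, in the degeneration argument, to lift a homogeneous basis of \(\operatorname{gr}H_{G}^{*}(X)\) back to a free basis of \(H_{G}^{*}(X)\).
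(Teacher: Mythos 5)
Your proposal is correct and follows essentially the same route as the paper: Leray--Hirsch for the implication from equivariant formality to freeness, and the Eilenberg--Moore spectral sequence \(\Tor_{H^{*}(BG)}(H^{*}(X_{G}),\kk)\Rightarrow H^{*}(X)\) with its edge homomorphism for the converse. The alternative Serre-spectral-sequence rank comparison you sketch is not needed (and its cancellation of Poincar\'e series is the delicate point you already flag), but the main argument matches the paper's proof.
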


\begin{proof}
  Recall that \(BG\) is simply connected and with homology of finite type
  if (and only if) \(G\) is connected and with homology of finite type,
  \cf~\cite[Cor.~7.29]{McCleary:2001} and the proof of Lemma\nobreakspace \ref {thm:HGG-HG} below.
  
  Assume that \(X\) is equivariantly formal over~\(\kk\), so that
  the restriction to the fibre~\(H_{G}^{*}(X)=H^{*}(X_{G})\to H^{*}(X)\) is surjective.
  The Leray--Hirsch theorem~\cite[VI.8.2]{Lamotke:1968},~\cite[Prop.~III.1.18]{tomDieck:1987}
  then implies that \(H^{*}(X_{G})\) is a free module over~\(H^{*}(BG)\).

  For the converse we use the Eilenberg--Moore spectral sequence~%
  \cite[Thm.~3.6]{Smith:1967}, \cite[Cor.~7.16]{McCleary:2001}
  \begin{equation}
    E_{2}^{p,*} = \Tor_{H^{*}(BG)}^{p}(H^{*}(X_{G}),\kk)
    \;\Rightarrow\; H^{*}(X).
  \end{equation}
  Since \(H^{*}(X_{G})\) is free over~\(H^{*}(BG)\),
  the higher \(\Tor\)-modules vanish and the spectral sequence degenerates
  at the second page, whence
  \begin{equation}
    H^{*}(X) \cong H^{*}(X_{G})\otimes_{H^{*}(BG)}\kk.
  \end{equation}
  It follows that the restriction to the fibre is surjective
  because the canonical map
  \begin{equation}
    H^{*}(X_{G})\otimes_{H^{*}(BG)}\kk \to H^{*}(X)
  \end{equation}
  induced by the restriction map
  corresponds to the edge homomorphism of the
  spectral sequence,
  see~\cite[Prop.~1.4\('\)]{Smith:1967}.
\end{proof}

We now turn to the proof of Theorem\nobreakspace \ref {thm:symprodformal}.
As in~\cite{Dold:1957}, it will be convenient to work with simplicial sets \cite{Lamotke:1968},~\cite{May:1968}.
We write \(\SS(X)\) for the simplicial set of singular simplices in a topological space~\(X\),
and \(H(\XX)\) for the homology of a simplicial set~\(\XX\) with coefficients in~\(\kk\)
as well as \(H^{*}(\XX)\) for its cohomology.

Recall that the singular simplices in a topological group~\(G\) form a simplicial group~\(\SS(G)\),
and those in a \(G\)-space~\(X\) a simplicial \(\SS(G)\)-set~\(\SS(X)\).
For any simplicial group~\(\GG\) there is a canonical universal \(\GG\)-bundle~\(E\GG\to B\GG\) \cite[\S 21]{May:1968}, and
for any simplicial \(\GG\)-set~\(\XX\) one can define
its equivariant cohomology~\(H_{\GG}^{*}(\XX)=H^{*}(E\GG\timesGG\XX)\).

The following observation is presumably not new, but we were unable to locate a suitable reference.

\begin{lemma}
  \label{thm:HGG-HG}
  Let \(G\) be a topological group and \(X\) a \(G\)-space. Then there is an isomorphism of graded \(\kk\)-algebras
  \begin{equation*}
    H_{\SS(G)}^{*}(\SS(X)) \to H_{G}^{*}(X),
  \end{equation*}
  compatible with the restriction maps to~\(H^{*}(\SS(X))=H^{*}(X)\).
\end{lemma}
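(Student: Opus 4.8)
The plan is to reduce both sides to ordinary topological Borel constructions via geometric realization, working throughout in the category of compactly generated spaces so that realization~$|{-}|$ preserves finite products. The key elementary observation is that the singular functor~$\SS$ preserves finite products; this is what makes $\SS(G)$ a simplicial group and $\SS(X)$ a simplicial $\SS(G)$-set in the first place, and more generally gives natural isomorphisms $\SS(G)^{p}\times\SS(X)\cong\SS(G^{p}\times X)$. Since the cohomology of a simplicial set agrees with that of its realization, it suffices to construct a natural weak equivalence
\[
  \bigl|E\SS(G)\times_{\SS(G)}\SS(X)\bigr| \simeq X_{G}
\]
and to check that it intertwines the two restrictions to the fibre.

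First I would use that realization commutes with the formation of~$E\GG$, $B\GG$ and the Borel construction $E\GG\timesGG\XX$ of a simplicial group~$\GG$ acting on a simplicial set~$\XX$: each of these is assembled from finite products and colimits, and $|{-}|$ is a left adjoint preserving finite products in compactly generated spaces. This yields a natural homeomorphism
\[
  \bigl|E\SS(G)\times_{\SS(G)}\SS(X)\bigr| \cong E\bigl|\SS(G)\bigr|\times_{|\SS(G)|}\bigl|\SS(X)\bigr|,
\]
in which $|\SS(G)|$ is a topological group acting on the space~$|\SS(X)|$.

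The second and decisive step is a change-of-group comparison along the counits of the realization--singular adjunction. The counit $\varepsilon_{G}\colon|\SS(G)|\to G$ is a continuous homomorphism and a weak equivalence, and $\varepsilon_{X}\colon|\SS(X)|\to X$ is an $\varepsilon_{G}$-equivariant weak equivalence. Together they induce a map from the fibre bundle
\[
  \bigl|\SS(X)\bigr| \to E\bigl|\SS(G)\bigr|\times_{|\SS(G)|}\bigl|\SS(X)\bigr| \to B\bigl|\SS(G)\bigr|
\]
to the bundle $X\to X_{G}\to BG$. This map is a weak equivalence on fibres (namely~$\varepsilon_{X}$) and on base spaces, since the natural isomorphism $\pi_{n}(BH)\cong\pi_{n-1}(H)$ makes $B\varepsilon_{G}$ a weak equivalence. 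Comparing the two Leray--Serre spectral sequences with coefficients in~$\kk$ --- whose $E_{2}$-pages are the cohomologies of the bases with local coefficients in the cohomology of the fibres, and which are matched isomorphically by~$B\varepsilon_{G}$ and~$\varepsilon_{X}$ --- shows that the map is an isomorphism in cohomology. Chaining the identifications gives the desired isomorphism $H_{\SS(G)}^{*}(\SS(X))\cong H_{G}^{*}(X)$; it is a map of graded algebras because every map involved is induced by a continuous or simplicial map and the spectral-sequence comparison is one of algebras. The compatibility with the restriction to the fibre is then automatic: under realization the fibre inclusion $\SS(X)\hookrightarrow E\SS(G)\times_{\SS(G)}\SS(X)$ corresponds to $X\hookrightarrow X_{G}$, while the identification $H^{*}(\SS(X))=H^{*}(X)$ is realized by~$\varepsilon_{X}$.

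I expect the main obstacle to be the change-of-group comparison rather than the formal compatibilities. One must ensure that $B\varepsilon_{G}$ is genuinely a weak equivalence --- which requires $G$ and~$\SS(G)$ to be well enough behaved, e.g.\ well-pointed, for the bar construction to be homotopy invariant --- and that the possibly non-trivial action of $\pi_{1}(BG)=\pi_{0}(G)$ on $H^{*}(X)$ is transported correctly, so that the induced map of spectral sequences is already an isomorphism on~$E_{2}$. The only other point needing care is that realization commutes with the bar construction, which is precisely where the passage to compactly generated spaces is used.
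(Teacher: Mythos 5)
Your argument is correct in outline but takes a genuinely different route from the paper. You realize everything topologically and compare the resulting bundle over \(B|\SS(G)|\) with \(X\to X_{G}\to BG\) via the counits of the realization--singular adjunction; the paper instead never leaves the simplicial category: it first identifies \(\SS(EG\timesG X)\) with \(\SS(EG)\timesGG\SS(X)\) (using local triviality of the universal bundle to lift singular simplices), and then compares \(\SS(EG)\timesGG\SS(X)\) with \(E\GG\timesGG\SS(X)\) along a \(\GG\)-map \(\SS(EG)\to E\GG\) supplied by May, using a spectral sequence of Moore's that is natural in the triple and has second page \(\Tor^{H(\GG)}(H(\EE),H(\XX))\); since \(EG\) and \(E\GG\) are both contractible, the second pages agree and the comparison map is an isomorphism. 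The paper's route avoids all point-set caveats (compact generation, well-pointedness, local coefficients), and the same naturality argument immediately yields the remark following the lemma that a \(\GG\)-map inducing an isomorphism in homology does so in equivariant cohomology. Your route is more geometric, and the two worries you flag can in fact be discharged more cheaply than you suggest: with Milnor's join model, \(EG\to BG\) is a numerable principal bundle for an arbitrary topological group, so \(\pi_{n}(BG)\cong\pi_{n-1}(G)\) requires no well-pointedness; and once you have a map of fibrations that is a weak equivalence on fibres and on bases, the five lemma applied to the long exact homotopy sequences (over all basepoints) shows that the map of total spaces is a weak equivalence, which gives the cohomology isomorphism without invoking the Serre spectral sequence or the local system at all. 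The one imprecision is the claimed natural homeomorphism \(\bigl|E\SS(G)\times_{\SS(G)}\SS(X)\bigr|\cong E|\SS(G)|\times_{|\SS(G)|}|\SS(X)|\): realization of the simplicial universal bundle produces \emph{a} contractible free \(|\SS(G)|\)-CW~complex rather than literally the model \(E|\SS(G)|\) you fix in advance, but this is harmless since any two such choices give weakly equivalent Borel constructions.
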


\begin{proof}
  Let \(\GG\) be a simplicial group, \(\EE\to\BB\) a principal \(\GG\)-bundle
  and \(\XX\) a simplicial \(\GG\)-set.
  By a theorem of Moore's~\cite[Thm.~7.28]{McCleary:2001},
  there is a spectral sequence which is natural in~\((\EE,\GG,\XX)\)
  and converging to~\(H(\EE\timesGG\XX)\) with second page
  \begin{equation}
    \label{eq:Tor}
    \Tor^{H(\GG)}(H(\EE),H(\XX)).
  \end{equation}

  This can be seen as follows:
  Denote the normalized chain functor with coefficients in~\(\kk\) by~\(N(-)\).
  It is a consequence of the twisted Eilenberg--Zilber theorem that
  the complexes~\(N(\EE\timesGG\XX)\) and \(N(\EE)\otimesGG N(\XX)\) are homotopic
  \cite[Prop.~4.6\(_{*}\)]{Gugenheim:1972}. The latter complex is homotopic
  to the bar construction~\(B(N(\EE),N(\GG),N(\XX))\) because
  the differential \(N(\GG)\)-modules~\(N(\XX)\) and \(B(N(\GG),N(\GG),N(\XX))\) are homotopic,
  \cf~the proof of~\cite[Prop.~7.8]{McCleary:2001}.
  The former bar construction can be filtered in such a way that the second page
  of the associated spectral sequence equals \eqref{eq:Tor}.

  Now set \(\GG=\SS(G)\) and \(\XX=\SS(X)\).
  We have \(\SS(EG\timesG X)=\SS(EG)\timesGG\XX\), and
  by~\cite[Lemma~21.9]{May:1968} there is a \(\GG\)-map~\(\SS(EG)\to E\GG\).
  So we only have to show that the induced map
  \begin{equation}
    \label{eq:SXG-SXGG}
    \SS(EG)\timesGG\XX \to E\GG\timesGG\XX
  \end{equation}
  induces an isomorphism in cohomology.
  But this follows from Moore's theorem because
  the map between the second pages of the spectral sequences
  \begin{equation}
    \Tor^{H(\GG)}(H(EG),H(\XX)) \to \Tor^{H(\GG)}(H(E\GG),H(\XX))
  \end{equation}
  is an isomorphism
  as both~\(EG\) and~\(E\GG\) are contractible.
  Hence \eqref{eq:SXG-SXGG} induces an isomorphism both in homology and cohomology.
\end{proof}

With a similar spectral sequence argument,
one can show that if a map~\(f\colon\XX\to\YY\) of simplicial \(\GG\)-sets
induces an isomorphism in homology, then it also does so in equivariant cohomology.

\begin{lemma}
  \label{thm:symprod-inj}
  Let \(f\colon\XX\to\YY \) be a map of simplicial sets.
  If \(H(f)\) is injective, then so is
  \(H(\GP{f}{\Gamma})\colon H(\GP{\XX}{\Gamma})\to H(\GP{\YY}{\Gamma})\).
\end{lemma}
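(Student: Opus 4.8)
The plan is to reduce the assertion to the homotopy invariance of Dold's \(\Gamma\)-product on the chain level. Write \(C=N(\XX)\) and \(C'=N(\YY)\); these are non-negatively graded complexes of \(\kk\)-vector spaces, and \(N(f)\colon C\to C'\) induces \(H(f)\) on homology. Since we work over a field, the first part of the argument is pure homological algebra: every such complex is chain homotopy equivalent to its homology equipped with the zero differential, and an injection of graded vector spaces splits. The goal of this first step is to produce a \emph{homotopy retraction} of \(N(f)\), that is, a chain map \(\rho\colon C'\to C\) with \(\rho\circ N(f)\simeq\mathrm{id}_{C}\).

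To obtain \(\rho\), I would fix chain homotopy equivalences \(C\simeq H(\XX)\) and \(C'\simeq H(\YY)\) (homology with zero differential), which exist over a field. Under these, \(N(f)\) corresponds up to homotopy to \(H(f)\colon H(\XX)\to H(\YY)\), an injection of graded vector spaces; choosing a linear retraction \(r\) with \(r\circ H(f)=\mathrm{id}\) and transporting it back through the equivalences yields \(\rho\). That \(\rho\circ N(f)\simeq\mathrm{id}_{C}\) is then a routine verification resting on the fact that two chain maps into a complex with zero differential are homotopic as soon as they agree on homology.

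The second step invokes Dold~\cite{Dold:1957}. His results provide a functor \(T\) on the category of non-negatively graded chain complexes of \(\kk\)-modules and chain maps, together with a natural isomorphism \(H(\GP{\XX}{\Gamma})\cong H(T(N(\XX)))\) (natural in the simplicial set~\(\XX\)), under which \(H(\GP{f}{\Gamma})\) corresponds to \(H(T(N(f)))\). The decisive property of \(T\), also due to Dold, is that it is a homotopy functor: chain homotopic maps induce the same homomorphism on \(H(T(-))\). Crucially, \(T\) is defined on \emph{all} chain maps, so it may be applied to the non-simplicial map~\(\rho\). Applying \(T\) to the homotopy \(\rho\circ N(f)\simeq\mathrm{id}_{C}\) and using functoriality gives
\[
  H(T(\rho))\circ H\bigl(\GP{f}{\Gamma}\bigr)
  =H\bigl(T(\rho\circ N(f))\bigr)
  =H\bigl(T(\mathrm{id}_{C})\bigr)
  =\mathrm{id},
\]
so that \(H(\GP{f}{\Gamma})\) has a left inverse and is therefore injective.

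The hard part is packaged entirely in the homotopy invariance of \(T\) borrowed from Dold, and it is worth recording why this cannot be taken for granted. The \(n\)-fold tensor \(N(f)^{\otimes n}\) is an equivariant chain homotopy equivalence, but the homotopy witnessing this is not \(\Gamma\)-equivariant, so passing to \(\Gamma\)-coinvariants — which is what forming the \(\Gamma\)-product amounts to on chains — does not visibly preserve it. One cannot repair this by an averaging argument, since \(|\Gamma|\) need not be invertible in~\(\kk\); this is precisely the modular situation, such as \(\Gamma=S_{n}\) in characteristic two, that the intended applications to M-varieties require. Dold's theorem asserts that the conclusion holds regardless, and the only genuinely new content above is the mild reformulation of the first step, which brings the injectivity hypothesis into the range of his result.
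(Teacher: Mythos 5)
Your proposal is correct and follows essentially the same route as the paper: the paper also splits $H(f)$ over the field, realizes the retraction at the level of (simplicial) chain complexes via Dold's Prop.~3.5, and then applies the $\Gamma$-product functor together with its homotopy invariance to obtain a retraction of $H(\GP{f}{\Gamma})$. The only cosmetic difference is that you phrase the argument in terms of a homotopy functor $T$ on chain complexes, whereas the paper works directly with the $\Gamma$-product as a functor on simplicial vector spaces; these are interchangeable via the Dold--Kan correspondence, which is how Dold himself sets things up.
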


\begin{proof}
  Recall that the \(\Gamma\)-product is a functor on the category of simplicial vector spaces \cite[\S 6.2]{Dold:1957}.
  We write the simplicial \(\kk\)-vector spaces
  of chains in~\(\XX\) and~\(\YY\)
  as~\(C(\XX)\) and~\(C(\YY)\), respectively.
  
  If \(H(f)\) is injective, it admits a retraction~\(R\colon H(\YY)\to H(\XX)\).
  By~\cite[Prop.~3.5]{Dold:1957}, there is a morphism of
  simplicial vector spaces~\(r\colon C(\YY)\to C(\XX)\)
  such that \(H(r)=R\).
  By functoriality, \(H(\GP{r}{\Gamma})\) then is a retraction of \(H(\GP{f}{\Gamma})\).
\end{proof}

\begin{proof}[Proof of Theorem\nobreakspace \ref {thm:symprodformal}]
  Let \(\GG=\SS(G)\).
  We first consider a simplicial \(\GG\)-space~\(\XX\).
  Let \(e_{0}\in (E\GG)_{0}\) be a fixed base point.
  By abuse of notation, we use the same symbol for any degeneration of~\(e_{0}\).
  Elements of~\(E\GG \timesGG\XX\) are written in the form~\([e,x]\) with~\(e\in E\GG\) and \(x\in\XX\).

  Noting that \(\GP{\XX}{\Gamma}\) is again a \(\GG\)-space, we consider the commutative diagram
  \begin{equation}
  \begin{tikzcd}
    \GP{\XX}{\Gamma} \arrow{dr}[pos=0.3,below=1ex]{\gamma} \arrow{rr}{\alpha} & & E\GG \timesGG \GP{\XX}{\Gamma} \arrow{dl}[pos=0.4]{\beta} \\
    & \;\;\;\GP{(E\GG \timesGG \XX)}{\Gamma} \rlap{,}
  \end{tikzcd}
  \end{equation}
  where \(\alpha\) is the inclusion of the fibre~\(\GP{\XX}{\Gamma}\),
  \begin{align}
    \alpha\colon [x_{1},\dots,x_{n}] &\mapsto \bigl[ e_{0}, [x_{1},\dots,x_{n}] \bigr], \\
  \shortintertext{\(\beta\) is the map}
    \beta\colon \bigl[ e, [x_{1},\dots,x_{n}] \bigr] &\mapsto \bigl[ [e,x_{1}],\dots,[e,x_{n}] \bigr]
  \shortintertext{and \(\gamma\) is the \(\Gamma\)-product of the inclusion of the fibre~\(\iota\colon\XX\hookrightarrow E\GG \timesGG \XX\),}
    \gamma\colon [x_{1},\dots,x_{n}] &\mapsto \bigl[ [e_{0},x_{1}],\dots,[e_{0},x_{n}] \bigr].
  \end{align}
  By assumption, \(H^{*}(\iota)\) is surjective. Equivalently, \(H(\iota)\) is injective.
  By Lemma~\ref{thm:symprod-inj}, this implies that \(H(\gamma)=H(\beta)H(\alpha)\)
  is injective and therefore also \(H(\alpha)\). Hence \(H^{*}(\alpha)\) is surjective.
  This proves the simplicial analogue of our claim.

  To deduce the topological result from this,
  consider the canonical maps
  \begin{equation}
    \bigl|\GP{\SS(X)}{\Gamma}\bigr| \to \GP{|\SS(X)|}{\Gamma} \to \GP{X}{\Gamma},
  \end{equation}
  where \(|-|\) denotes topological realization.
  As explained in the proof of~\cite[Thm.~7.2]{Dold:1957},
  the first map is a homeomorphism between compact subsets, and the second one
  is a homotopy equivalence because \(|\SS(X)|\to X\) is so
  for the CW~complex~\(X\).
  As a consequence, the
  \(\GG\)-equivariant map~\(\SS(X)^{\Gamma}\to \SS(X^{\Gamma})\) is a quasi-isomorphism.
  Hence the surjectivity of the top row
  in the diagram
  \begin{equation}
  \begin{tikzcd}
    H_{\GG}^{*}(\GP{\SS(X)}{\Gamma}) \arrow{d}[left]{\cong} \arrow{r} & H^{*}(\GP{\SS(X)}{\Gamma}) \arrow{d}{\cong} \\
    H_{\GG}^{*}(\SS(\GP{X}{\Gamma})) \arrow{r} & H^{*}(\SS(\GP{X}{\Gamma}))
  \end{tikzcd}
  \end{equation}
  implies that of the bottom row.
  We conclude the proof with Lemma~\ref{thm:HGG-HG}.
\end{proof}

\begin{proof}[Proof of Corollary\nobreakspace \ref {thm:symprodmvar}]
  Recall that algebraic varieties are finite-dimensional CW complexes
  (see the proof of Proposition~\ref{thm:betti-sum-equiv-cohom}) with finite Betti sum.
  Because \(X\) is quasi-projective, \(\GP{X}{\Gamma}\) is again an algebraic variety,
  \cf~\cite[Example~6.1]{Dolgachev:2003}. By what we have said in the introduction,
  it is enough to verify that \(\GP{X}{\Gamma}\) is equivariantly formal with
  respect to complex conjugation. This follows from Theorem\nobreakspace \ref {thm:symprodformal}.
\end{proof}

\begin{lemma}
  \label{thm:surjection}
  Let \(X\) be a CW~complex, \(y\in X\) and \(\Gamma\subset S_{n}\) for some~\(n\ge1\).
  Then the map
  \begin{equation*}
    f\colon X\to\GP{X}{\Gamma},
    \quad
    x\mapsto[x,y,\dots,y]
  \end{equation*}
  induces an injection in homology.
\end{lemma}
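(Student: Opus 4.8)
The plan is to reduce the statement to the case of the full symmetric product and then to invoke Dold's computation of its homology. Since \(\Gamma\subset S_{n}\), the identity on~\(X^{n}\) descends to a canonical surjection \(q\colon\GP{X}{\Gamma}\to\SP{X}{n}\), and the composite~\(q\circ f\) is the map
\[
  \iota\colon X=\SP{X}{1}\to\SP{X}{n},\qquad x\mapsto\{x,y,\dots,y\},
\]
that is, the inclusion of the first stage of the symmetric-product filtration with respect to the base point~\(y\). Because \(H(q\circ f)=H(q)\,H(f)\) and injectivity of a composite forces injectivity of its first factor, it suffices to show that \(H(\iota)\) is injective. This disposes of the general subgroup~\(\Gamma\) and lets me work with \(\SS\)-products.

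I would first treat the degree-zero part by inspecting path components. One has \(\pi_{0}(\SP{X}{n})=\SP{\pi_{0}(X)}{n}\), the set of \(n\)-element multisets of components of~\(X\), and \(\iota\) sends a component~\(c\) to the multiset~\(\{c,[y],\dots,[y]\}\). Since \(c\) is recovered by deleting the \(n-1\) copies of~\([y]\), this assignment is injective; hence \(\iota\) is injective on~\(\pi_{0}\) and therefore on~\(H_{0}\).

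In positive degrees I would appeal to Dold's splitting of the homology of symmetric products over a field~\cite{Dold:1957}. The filtration \(\SP{X}{1}\subset\SP{X}{2}\subset\dots\subset\SP{X}{n}\) splits in homology, and its first stage identifies the reduced homology~\(\tilde H(X)\) with a natural direct summand of~\(\tilde H(\SP{X}{n})\), realized precisely by~\(H(\iota)\). Consequently \(H(\iota)\) restricts to a split monomorphism on~\(\tilde H(X)=H(X)\) in positive degrees, which together with the previous paragraph gives injectivity of~\(H(\iota)\) in all degrees, and thus of~\(H(f)\).

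The delicate point is this last step. Over a field whose characteristic divides~\(n!\) one cannot argue by averaging over~\(S_{n}\) or by passing to \(\Gamma\)-invariants, and the phenomenon is genuine: the naive diagonal map~\(x\mapsto\{x,\dots,x\}\) already fails to be injective mod~\(p\) (for \(X=S^{1}\), \(n=2\) and \(\kk=\Z_{2}\) its image in \(H_{1}(\SP{X}{2})\) vanishes, whereas our~\(\iota\) remains injective). It is exactly here that Dold's precise splitting of \(H(\SP{X}{n})\), rather than any formal invariant-theoretic device, is indispensable; everything else in the argument is elementary.
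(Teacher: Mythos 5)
Your argument is correct, but it takes a genuinely different route from the paper's. You first compose with the canonical surjection \(q\colon\GP{X}{\Gamma}\to\SP{X}{n}\) and use that injectivity of \(H(q\circ f)\) forces injectivity of \(H(f)\); this reduces the whole lemma to the classical fact that the basepoint inclusion \(X=\SP{X}{1}\hookrightarrow\SP{X}{n}\) is split injective in homology, which you quote from Dold's splitting of the symmetric-power filtration (your separate treatment of \(\pi_{0}\), and hence \(H_{0}\), is also fine). The paper instead works directly with the \(\Gamma\)-product: following \S\S 8--9 of~\cite{Dold:1957} it splits \(C(X)=\tilde C(X)\oplus\kk\) via the basepoint and identifies the direct summand \((V,W)^{1}/\Gamma\) of \((V\oplus W)^{\otimes n}/\Gamma\) with \(b\) copies of \(V\otimes W^{\otimes(n-1)}\), where \(b\) is the number of \(\Gamma\)-orbits in \(\{1,\dots,n\}\); this exhibits \(b\) copies of \(\tilde H(X)\) inside \(H(\GP{X}{\Gamma})\), one of which \(H(f)\) hits isomorphically. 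Your reduction buys economy: you need only the most classical case \(\Gamma=S_{n}\) (where \(b=1\)) and no analysis of general subgroups. The paper's computation buys a little more information (the number \(b\) of such summands) and stays inside the simplicial-vector-space framework used throughout the rest of its proofs; note also that the splitting theorem you invoke is itself established by exactly the kind of direct-summand decomposition the paper writes out, so in substance you are calling as a black box what the paper re-derives in the generality it needs. Your closing remark is well taken: the failure of the diagonal \(x\mapsto[x,x]\) on \(H_{1}(\SP{S^{1}}{2};\Z_{2})\) shows that no averaging or invariant-theoretic shortcut is available in characteristic dividing \(n!\), and that the basepoint inclusion, not the diagonal, is the right map to use.
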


\begin{proof}
  Let \(V\) and~\(W\) be \(\kk\)-vector spaces. Following~\cite[\S 8]{Dold:1957},
  we write \((V,W)^{1}\) for the \(\Gamma\)-submodule
  \begin{equation}
    \bigoplus_{k=1}^{n}\, W^{\otimes(k-1)}\otimes V\otimes W^{\otimes(n-k)}
    \subset (V\oplus W)^{\otimes n},
  \end{equation}
  which is in fact a direct summand.
  Moreover, \((V,W)^{1}/\Gamma\) is isomorphic to the direct sum of \(b\)~copies of~\(V\otimes W^{n-1}\),
  where \(b\) is the number of \(\Gamma\)-orbits in~\(\{1,\dots,n\}\).
  
  As in~\cite[\S 8]{Dold:1957}, this construction carries over to simplicial vector spaces.
  The choice of a base point~\(y\in X\) determines a splitting~\(C(X)=\tilde C(X)\oplus\kk\),
  \cf~\cite[\S 9]{Dold:1957}. Moreover, \(H(\GP{X}{\Gamma})\) contains \(b\)~summands
  of the form~\(\tilde H(X)\otimes\kk^{\otimes(n-1)}\),
  and \(H(f)\) maps \(\tilde H(X)\) isomorphically onto one of them.
  Hence \(H(X)\) injects into~\(H(\GP{X}{\Gamma})\).
\end{proof}

\begin{proof}[Proof of Proposition\nobreakspace \ref {thm:converseGP}]
  By Lemma\nobreakspace \ref {thm:surjection}, any choice of base point~\(y\in X\)
  gives a map~\(f\) inducing a surjection in cohomology.
  If \(y\) is a fixed point, then \(f\) is equivariant.
  The left and bottom arrow in the commutative diagram
  \begin{equation}
    \begin{tikzcd}
      H^{*}(X) & H_{G}^{*}(X) \arrow{l} \\
      H^{*}(\GP{X}{\Gamma}) \arrow{u}{H^{*}(f)} & H_{G}^{*}(\GP{X}{\Gamma}) \arrow{l} \arrow{u}[right]{H_{G}^{*}(f)}
    \end{tikzcd}
  \end{equation}
   are surjective, hence so is the top one.
\end{proof}

\begin{proof}[Proof of Proposition\nobreakspace \ref {thm:trivactioncohom}]
  As in the proof of Theorem\nobreakspace \ref {thm:symprodformal},
  we can consider the \(\Gamma\)-product of the simplicial vector space~\(C(X)\) instead of~\(X^{\Gamma}\)
  since \(X\) is a CW~complex.

  Let \(g\in G\); it induces a map~\(a_{g}\colon X\to X\), \(x\mapsto gx\).
  By assumption, \(H(a_{g})\) is the identity. Hence \(N(a_{g})\colon N(X)\to N(X)\)
  is homotopy equivalent to the identity map on the normalized chain complex of~\(X\), \cf~\cite[Prop.~II.4.3]{Dold:1980}.
  By~\cite[Cor.~2.7]{Dold:1957} this implies that the morphism~ of simplicial vector spaces~\(C(a_{g})\colon C(X)\to C(X)\)
  is also homotopy equivalent to the identity.
  Because \(\Gamma\)-products preserve homotopy \cite[Thm.~5.6, \S 6.2]{Dold:1957},
  we conclude that \(\GP{C(a_{g})}{\Gamma}\) is again homotopy equivalent to the identity,
  so that \(g\) acts trivially in the (co)homology of~\(\GP{X}{\Gamma}\).

  The converse follows from Lemma\nobreakspace \ref {thm:surjection} as in the proof of Proposition\nobreakspace \ref {thm:converseGP}.
\end{proof}

\begin{proof}[Proof of Proposition\nobreakspace \ref {thm:SPm-weakly-tight}]
  Let \(Y\) be a finite \(C\)-CW~complex;
  then \(\SP{Y}{n}\) is again a finite \(C\)-CW~complex.
  Recall from~\cite[Prop.~3.7]{Baird}
  that the Serre spectral sequence
  for~\(Y_{C}\) degenerates at the second page if and only if
  \begin{equation}
    \label{eq:crit-weakly-tight}
    \dim H^{*}(Y^{\tau}) = \dim H^{*}(Y)\triv.
  \end{equation}
  (In~\cite{Baird}, \(Y\) is assumed to be a compact \(\Z_{2}\)-manifold;
  the proof carries over to our setting.)
  We refer to~\cite[Sec.~3.2]{Baird} for the definition of~\(V\triv\) for a finite-dimensional \(\Z_{2}\)-vector space~\(V\)
  with a linear involution; we only observe that for any two such vector spaces we have isomorphisms
  \begin{equation}
    \label{eq:sum-tensor-triv}
    (V\oplus W)\triv \cong V\triv\oplus W\triv
    \quad\text{and}\quad
    (V\otimes W)\triv \cong V\triv\otimes W\triv.
  \end{equation}
  (By the first isomorphism, it is enough to verify the second for the trivial
  and the \(2\)-dimensional indecomposable \(\Z_{2}\)-module.)
  Moreover, if \(U\) is a finite-dimensional vector space
  and \(\Z_{2}\) acts on ~\(U\otimes U\) by swapping the factors, then
  \begin{equation}
    \label{eq:tensor-swap-triv}
    (U\otimes U)\triv \cong U.
  \end{equation}

  To prove the proposition, assume first that \(X\ne\emptyset\) is connected.
  Then \(H^{0}(X)\triv\) does not vanish, which by~\eqref{eq:crit-weakly-tight}
  implies that \(X\) has fixed points.
  Our claim therefore reduces to Baird's result~\cite[Prop.~3.9]{Baird}.

  Next we recall that if \(X=Y\sqcup Z\) is the disjoint union
  of two subspaces, then
  \begin{equation}
    \label{eq:SP-Y-cup-Z}
    \SP{X}{n} = \!\bigsqcup_{k+l=n}\!\SP{Y}{k}\times\SP{Z}{l},
  \end{equation}
  see~\cite[eq.~(8.8)]{Dold:1957}.

  Assume that \(\tau\) transposes \(Y\) and \(Z\approx Y\), \cf~\cite[Prop.~3.2]{Baird}, so that \(H^{*}(X)\triv=0\).
  The subspaces in~\eqref{eq:SP-Y-cup-Z} are then also permuted by~\(\tau\).
  If \(n\) is odd, none of them is \(\tau\)-stable.
  Hence there are no fixed points and \(H^{*}(\SP{X}{n})\triv=0\), proving our claim.

  If \(n=2k\) is even, then \(\SP{X}{n}^{\tau} = (\SP{Y}{k}\times\SP{Z}{k})^{\tau} \approx \SP{Y}{k}\) and
  \begin{align}
    H^{*}(\SP{X}{n})\triv &= H^{*}(\SP{Y}{k}\times\SP{Z}{k})\triv \\
    &\cong \bigl(H^{*}(\SP{Y}{k})\otimes H^{*}(\SP{Y}{k})\bigr)\triv
    \cong H^{*}(\SP{Y}{k})
  \end{align}
  by~\eqref{eq:tensor-swap-triv}.
  Thus, the criterion~\eqref{eq:crit-weakly-tight} is again satisfied.

  Consider finally the case of general~\(X\).
  Its finitely many connected components are either stable under~\(\tau\)
  or come in pairs that are transposed by~\(\tau\). If both~\(Y\) and~\(Z\) in~\eqref{eq:SP-Y-cup-Z} are \(\tau\)-stable,
  then we also have
  \begin{equation}
    \SP{X}{n}^{\tau} = \!\bigsqcup_{k+l=n}\!\SP{Y}{k}^{\tau}\times\SP{Z}{l}^{\tau}.
  \end{equation}
  Hence the claim follows from the two cases already discussed together with the Künneth formula
  and the identities~\eqref{eq:sum-tensor-triv}.
\end{proof}

\begin{proof}[Proof of Example\nobreakspace \ref {ex:SP2-SigmaT}]
  Recall that \(\kk=\R\) in this example and write \(I=[0,1]\). The projection \(\pi\colon X=(G\times I)/{\sim}\to I\)
  induces a projection~\(\SP{\pi}{2}\)
  from~\(\SP{X}{2}\) onto~\(\SP{I}{2}\), which we identify with the triangle with vertices~\((0,0)\),~\((0,1)\) and~\((1,1)\).
  
  We consider the spectral sequence~\(E_{k}^{p,q}\) induced by the filtration of~\(\SP{X}{2}\)
  by the inverse images of the faces of this triangle and converging to~\(H_{G}^{*}(\SP{X}{2})\).
  The fibre over each vertex is a fixed point and contributes a summand~\(H^{*}(BG)\)
  to~\(E_{1}^{0,*}\), the zeroeth column of the first page of the spectral sequence.
  The fibre over each leg is \(G\) and contributes a copy of~\(\R\) to~\(E_{1}^{1,*}\).
  The fibre over the hypotenuse is \(\SP{G}{2}\), on which \(G\) acts locally freely.
  Hence,
  \begin{equation}
    H_{G}^{*}(\SP{G}{2})=H^{*}(\SP{G}{2}/G)=H^{\mathrm{even}}(G),
  \end{equation}
  as can be seen
  by first dividing \(G\times G\) by the diagonal \(G\)-action and then by~\(S_{2}\).
  The fibre over the interior of the triangle is \(G\times G\), hence contributes \(H^{*}(G)\)
  to~\(E_{1}^{2,*}\).

  For~\(q>0\), the differential~\(d_{1}\colon E_{1}^{1,q}\to E_{1}^{2,q}\)
  is the inclusion~\(H^{\mathrm{even}}(G)\hookrightarrow H^{*}(G)\),
  and the zeroeth row~\(E_{1}^{*,0}\) computes the cohomology of the triangle.
  This implies \(E_{2}^{1,*}=0\) and \(E_{2}^{2,*}=H^{\mathrm{odd}}(G)\).

  We claim that the differential
  \begin{equation*}
    d_{2}^{\mkern1.5mu 0,2s}\colon E_{2}^{0,2s} \to E_{2}^{2,2s-1}
  \end{equation*}
  vanishes for~\(s>1\). Inspired by the proof of~\cite[Thm.~5]{Totaro:2014},
  we consider the squaring map on~\(G\).
  It induces a map~\(X\to X\) preserving the fibres of~\(\pi\), hence also a map
  \(\SP{X}{2}\to\SP{X}{2}\) preserving the fibres of~\(\SP{\pi}{2}\).
  We therefore get a map of spectral sequences which scales \(E_{2}^{0,2s}\) by~\(2^{s}\)
  and \(E_{2}^{2,2s-1}\) by~\(2^{2s-1}\). Because this map commutes with the differentials in the
  spectral sequence and \(2^{s}\ne 2^{2s-1}\) for~\(s>1\), we conclude \(d_{2}^{\mkern1.5mu 0,2s}=0\) for~\(s>1\).
  Hence \(E_{\infty}^{2,*}=E_{3}^{2,*}=H^{\mathrm{odd}}(G)\), except possibly in degree~\(1\).
  Since \(r\ge3\), this shows that \(E_{\infty}^{2,*}\) is non-zero and finite-dimensional over~\(\R\).
  As mentioned earlier, this implies that \(H_{G}^{*}(\SP{X}{2})\) has torsion over~\(H^{*}(BG)\).
\end{proof}

\end{document}